\title{Support materials}
\author{IC}
\date{\today}
\theoremstyle{plain}
\newtheorem{thm}{Theorem}[section]
\newtheorem{theorem}[thm]{Theorem}
\theoremstyle{definition}
\newtheorem{definition}[thm]{Definition}
\newtheorem{remark}[thm]{Remark}
\newtheorem{example}[thm]{Example}
\newtheorem{thevarthm}[thm]{\varthmname}
\newenvironment{varthm*}[1]{\trivlist\item[]{\bf #1.}\it}{\endtrivlist}
\renewcommand\geq{\geqslant}
\renewcommand\leq{\leqslant}
\newcommand\be{\begin{eqnarray*}}
\newcommand\ee{\end{eqnarray*}}
\newcommand\newop[2]{\def#1{\mathop{\rm #2}\nolimits}}
\newop\log{log}
\newop\ord{ord}
\newop\Gal{Gal}
\newop\SL{SL}
\newop\Bl{Bl}
\newop\mult{mult}
\newop\mass{mass}
\newop\div{div}
\newop\codim{codim}
\newop\sing{sing}
\newop\vdim{vdim}
\newop\edim{edim}
\newop\Ass{Ass}
\newop\size{size}
\newop\reg{reg}
\newop\satdeg{satdeg}
\newop\supp{supp}
\newop\Neg{Neg}
\newop\Nef{Nef}
\newop\Nefh{Nef_H}
\newop\Eff{Eff}
\newop\Zar{Zar}
\newop\MB{MB}
\newop\MBxC{MB\mathit{(x,C)}}
\newop\NnB{NnB}
\newop\Bigg{Big}
\newop\Effbar{\overline{\Eff}}
\def\keywordname{{\bfseries Keywords}}%
\def\keywords#1{\par\addvspace\medskipamount{\rightskip=0pt plus1cm
\def\and{\ifhmode\unskip\nobreak\fi\ $\cdot$
}\noindent\keywordname\enspace\ignorespaces#1\par}}
\def\subclassname{{\bfseries Mathematics Subject Classification
(2020)}\enspace}
\def\subclass#1{\par\addvspace\medskipamount{\rightskip=0pt plus1cm
\def\and{\ifhmode\unskip\nobreak\fi\ $\cdot$
}\noindent\subclassname\ignorespaces#1\par}}
\begin{document}
\title{On the existence of maximizing curves of odd degrees}
\author{Marek Janasz and Izabela Leśniak
}
\date{\today}
\maketitle

\thispagestyle{empty}
\begin{abstract}
In this paper we provide the non-existence criterion for the so-called maximizing curves of odd degrees. Furthermore, in the light of our criterion, we define a new class of plane curves that generalizes the notion of maximizing curves which we call as $M$-curves.
\end{abstract}
\keywords{maximizing plane curves, plane curves singularities}
\subclass{14H50, 32S25, 14C20}

\section{Introduction}
In this paper we study the notion of maximizing curves of odd degree. Let us recall that these curves were recently defined by Dimca and Pokora \cite{DimPok2023} in the spirit of maximizing curves of even degree defined by Persson \cite{Persson}. It is worth noting here that maximizing curves of odd degree are not directly related to the geometry of algebraic surfaces, unlike maximizing curves of even degree, but these curves enjoy certain extreme properties, such as being the so-called maximal Tjurina curves. Moreover, maximizing curves, both of even and odd degrees, are free curves, which makes them even more interesting from the perspective of questions bridging the combinatorics and homological properties of plane curves. The main goal of this paper is to approach a very natural problem of constructing maximizing plane curves of odd degrees, since these curves are extremely rare. To be more precise, we do not know \text{any} example of a maximizing curve of odd degree $\geq 11$, which is very surprising to us. Probably the main reason why it is notoriously difficult to construct such curves is the fact that maximizing curves admit (only) ${\rm ADE}$ singularities. Our main result in this paper tells us that if $C$ is a reduced plane curve of odd degree $2m+1 \geq 7$ admitting $A_{k}$ singularities with $k\leq 4m$, $D_{l}$ singularities with $l \leq 2m+1$ and $E_{6}$ singularities, then $C$ cannot be maximizing - see Theorem \ref{main} for details. This result directly explains by itself why it is so difficult to find maximizing curves of odd degrees. As a remedy for this problem, we propose a new class of reduced plane curves, which we will call $M$-curves. Roughly speaking, $M$-curves are reduced plane curves that admit ${\rm ADE}$ and simple elliptic singularities that are free, and have, in a strict sense, extreme values of the total Tjurina numbers. We provide a criterion for deciding whether a given curve $C$ is an $M$-curve, and present some examples to justify our hope that $M$-curves might be more accessible (constructionally) than maximizing curves.

Let us present an outline of our paper. In Section $2$ we provide all necessary definitions and results about maximizing curves of odd degree. In Section $3$ we give our main result, which is a handy criterion for the non-existence of some maximizing curves of odd degree. In Section $4$, we define $M$-curves and we describe their basic properties from the perspective of their total weak combinatorics. 

We work over the complex numbers and all necessary symbolic computations are performed using \verb}SINGULAR} \cite{Singular}.
\section{Preliminaries}

Let $S := \mathbb{C}[x,y,z]$ be the graded ring of polynomials with the complex coefficients. For a homogeneous polynomial $f \in S$ let us denote by $J_{f}$ the Jacobian ideal associated with $f$, namely this is the ideal of the form $J_{f} = \langle \partial_{x}\, f, \partial_{y} \, f, \partial_{z} \, f \rangle$. Since we are going to work with singularities of plane curves, we need to recall some basics and introduce the notation.

\begin{definition}
Let $p$ be an isolated singularity of a polynomial $f\in \mathbb{C}[x,y]$. Since we can change the local coordinates, assume that $p=(0,0)$.

The number 
$$\mu_{p}=\dim_\mathbb{C}\left(\mathbb{C}\{x,y\} /\bigg\langle \frac{\partial f}{\partial x},\frac{\partial f}{\partial y} \bigg\rangle\right)$$
is called the Milnor number of $f$ at $p$.

The number
$$\tau_{p}=\dim_\mathbb{C}\left(\mathbb{C}\{x,y\} /\bigg\langle f,\frac{\partial f}{\partial x},\frac{\partial f}{\partial y}\bigg\rangle \right)$$
is called the Tjurina number of $f$ at $p$.
\end{definition}

For a projective situation, with a point $p\in \mathbb{P}^{2}_{\mathbb{C}}$ and a homogeneous polynomial $f\in \mathbb{C}[x,y,z]$, we can take local affine coordinates such that $p=(0 : 0 : 1)$ and then the dehomogenization of $f$.\\
Finally, the total Tjurina number of a given reduced curve $C \subset \mathbb{P}^{2}_{\mathbb{C}}$ is defined as
$${\rm deg}(J_{f}) = \tau(C) = \sum_{p \in {\rm Sing}(C)} \tau_{p}.$$ 
Moreover, if $C = \{ f=0 \}$ is a reduced plane curve with only quasi-homogeneous singularities (i.e., there exists a holomorphic change of variables so that the defining equation becomes weighted homogeneous), then for every singular point $p \in {\rm Sing}(C)$ one has $\tau_{p} = \mu_{p}$, and hence $$\tau(C) = \sum_{p \in {\rm Sing}(C)} \tau_{p} = \sum_{p \in {\rm Sing}(C)} \mu_{p} = \mu(C),$$
which means that the total Tjurina number of $C$ is equal to the total Milnor number of $C$.

We will need the following important invariant that is attached to the syzygies of $J_{f}$.
\begin{definition}
Consider the graded $S$-module of Jacobian syzygies of $f$, namely $$AR(f)=\{(a,b,c)\in S^3 : af_x+bf_y+cf_z=0\}.$$
The minimal degree of non-trivial Jacobian relations for $f$ is defined as
$${\rm mdr}(f):=\min\{r : AR(f)_r\neq (0)\}.$$ 
\end{definition}

Now we can introduce the notion of a free curve that was introduced in \cite{Saito}.
\begin{definition}
We say that a reduced curve $C = \{f = 0\}$ in the complex projective plane is \textbf{free} if the Jacobian ideal $J_{f}$ associated with $f$ is saturated with respect to the irrelevant ideal $\mathfrak{m} = \langle x, y, z \rangle$. 
\end{definition}
It is very difficult to check the freeness of curves using the above definition. However, we have the following nice criterion that focuses on the total Tjurina number and the minimal degree of the Jacobian relations \cite{duP}.
\begin{theorem}[Freeness criterion]
Let $C = \{ f=0 \}$ be a reduced curve in $\mathbb{P}^{2}_{\mathbb{C}}$ of degree $d$. Then the curve $C$  is free if and only if
\begin{equation}
\label{duPles}
(d-1)^{2} - r(d-r-1) = \tau(C).
\end{equation}
\end{theorem}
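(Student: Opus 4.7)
The plan is to study the graded $S$-module of Jacobian syzygies $AR(f)$ and its connection to the Jacobian ideal $J_f$, handling the two implications separately. The core ingredients are Saito's structure theorem for free plane curves and the du Plessis--Wall inequality that bounds $\tau(C)$ in terms of $r = {\rm mdr}(f)$.

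For the forward direction, if $C$ is free then $J_f$ is $\frakm$-saturated, so the stable value of the Hilbert polynomial of $S/J_f$ equals $\tau(C)$. By Saito's criterion in its projective form, $AR(f)$ is a free graded $S$-module of rank $2$, with homogeneous generators in degrees $d_1 \leq d_2$ satisfying $d_1 + d_2 = d-1$; by the very definition of ${\rm mdr}(f)$ the smaller exponent equals $r$. The associated graded minimal free resolution of $J_f$ reads
$$0 \longto S(-r-d+1) \oplus S(-(d-1-r)-d+1) \longto S(-(d-1))^3 \longto J_f \longto 0,$$
and reading off the Hilbert polynomial of $S/J_f$ from this resolution yields the constant $(d-1)^2 - r(d-1-r)$, which is precisely the right-hand side of \eqnref{duPles}.

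For the converse, the key input is the du Plessis--Wall inequality
$$\tau(C) \leq (d-1)^2 - r(d-1-r),$$
valid for every reduced plane curve of degree $d$ in the regime $r \leq (d-1)/2$ (the complementary range reduces directly to the smooth case). The characterization of equality asserts that saturation of the bound forces the local cohomology module $H^0_{\frakm}(S/J_f) = \widetilde{J_f}/J_f$ to vanish, and this vanishing is exactly the statement that $J_f$ is $\frakm$-saturated, i.e.\ that $C$ is free.

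The main obstacle is the du Plessis--Wall inequality itself, together with its equality characterization. Its proof rests on a careful homological analysis of the Koszul-type complex attached to the triple $f_x, f_y, f_z$ and a Jacobian syzygy realising the minimal degree $r$, with a delicate accounting of when the bound becomes strict. Once this ingredient is granted, both directions of the theorem reduce to routine bookkeeping with Hilbert series.
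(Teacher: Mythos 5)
This theorem is not proved in the paper at all --- it is quoted from du Plessis and Wall \cite{duP} --- so there is no internal argument to compare against; judged on its own merits, your forward direction is a correct sketch: freeness gives $AR(f)\cong S(-r)\oplus S(-(d-1-r))$ with exponents summing to $d-1$, the displayed resolution of $J_f$ is the right one, and the Hilbert polynomial of $S/J_f$ (which is the constant $\tau(C)$) comes out of that resolution as $(d-1)^2-r(d-1-r)$.

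The converse, however, contains a genuine gap. The ``characterization of equality'' you invoke --- that attaining the bound forces $H^0_{\mathfrak m}(S/J_f)=\widetilde{J_f}/J_f=0$ --- is, with the paper's definition of freeness as saturation of $J_f$, exactly the implication you are being asked to prove; as written, the converse is a restatement rather than an argument, and you would either have to actually carry out the du Plessis--Wall homological analysis or cite it explicitly (which is all the paper does). In addition, your parenthetical claim that the range $r\ge d/2$ ``reduces directly to the smooth case'' is false: there are plenty of singular curves with $d/2\le r\le d-1$ (for instance a quartic with one node has $r=2=d/2$). The correct way to dispose of that range is the refined du Plessis--Wall bound, quoted later in the paper as Theorem \ref{thm1} b): for $d/2\le r\le d-1$ one has $\tau(C)\le (d-1)(d-r-1)+r^2-\binom{2r-d+2}{2}$, and since $2r-d+2\ge 2$ the correction term is at least $1$, so equality in \eqref{duPles} cannot occur there; hence equality forces $r<d/2$, where the equality-iff-free statement of Theorem \ref{thm1} a) applies.
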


Next, let us recall the classification of ${\rm ADE}$ singularities for curves by presenting their local normal forms captured from \cite{arnold}, namely
\begin{center}
\begin{tabular}{ll}
$A_{k}$ with $k\geq 1$ & $: \, x^{2}+y^{k+1}  = 0$, \\
$D_{k}$ with $k\geq 4$ & $: \, y^{2}x + x^{k-1}  = 0$,\\
$E_{6}$ & $: \, x^{3} + y^{4} = 0$, \\
$E_{7}$ & $: \, x^{3} + xy^{3} = 0$, \\
$E_{8}$ & $:\, x^{3}+y^{5} = 0$.
\end{tabular}
\end{center}
All reduced plane curves that admit only ${\rm ADE}$ singularities will be called \textbf{simply singular curves}.
In this paper we want to understand the existence of maximizing curves in odd degrees. Now we define our main object of studies.
\begin{definition}
Let $C = \{ f=0 \}$ be a reduced simply singular curve in $\mathbb{P}^{2}_{\mathbb{C}}$ of \textbf{odd degree} $n=2m+1\geq 5$. We say that $C$ is a \textit{maximizing} curve if 
$$\tau(C) = 3m^{2}+1.$$
\end{definition}
\begin{remark}
It is worth recalling that the property of being a maximizing curve $C = \{f=0\}$ of degree $2m+1$ is equivalent to the fact that this curve is free. Hence maximizing curves are free curves. Moreover, the maximality of $C = \{f=0\}$ implies that ${\rm mdr}(f)=m-1$.
\end{remark}

\section{The non-existence criterion for maximizing curves of odd degrees} 
Here is our main result of the paper.
\begin{theorem}
\label{main}
Let $C = \{ f=0 \}$ be a reduced simply singular curve in $\mathbb{P}^{2}_{\mathbb{C}}$ of \textbf{odd degree} $d=2m+1$ for $m \geq 3$ such that $C$ admits
\begin{itemize}
    \item singularities of type $A_k$ for $k \leq 4m$,
    \item singularities of type $D_l$ for $l \leq 2m+1$, and
    \item singularities of type $E_6$.
\end{itemize}
Then $C$ \textbf{cannot be} maximizing. \\
Furthermore, if $m \geq 5$, then we can add to the above list $E_7$ singularities. Finally, if $m \geq 8$, then we can add to the above list both $E_7$ and $E_8$ singularities.
\end{theorem}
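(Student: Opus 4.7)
The plan is to argue by contradiction. Assume $C$ is maximizing; then the Remark gives $r := {\rm mdr}(f) = m-1$ and that $C$ is free, so the freeness criterion forces
\[
\tau(C) = (d-1)^2 - (m-1)(m+1) = 3m^2 + 1.
\]
Since every ADE type is quasi-homogeneous, $\tau_p = \mu_p$ at each singular point, and therefore $\mu(C) = 3m^2+1$ as well. The whole game is to contradict this equality under the hypothesis on singularity types.

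I would proceed in two steps. In the first, I would use the nontrivial Jacobian syzygy of minimal degree $r = m-1$ to produce a local-to-global bound: a triple $(a,b,c) \in AR(f)_{m-1}$ restricts at each $p \in {\rm Sing}(C)$ to a local vector field tangent to $C$, and via the du Plessis--Wall analysis this yields an inequality of the shape $\tau_p \le h(\text{type}(p), d, r)$ specific to each ADE type; the hypothesis ranges $k \le 4m$ and $l \le 2m+1$ are exactly those where this local control remains sharp. In the second step, I would collect these local bounds and combine them with the $\delta$-invariant bound on the admissible number of singularities on a degree-$d$ curve (with the appropriate correction when $C$ is reducible) to show that
\[
\max_{\text{admissible configurations}} \sum_{p} \tau_p \; < \; 3m^2 + 1
\]
for every $m \ge 3$, in direct contradiction with the maximizing equality $\tau(C)=3m^2+1$.

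For the extensions, an $E_7$-point contributes $\mu = 7$ and an $E_8$-point contributes $\mu = 8$; these are large enough that the summed upper bound above only falls strictly below $3m^2+1$ once $m$ is sufficiently big, namely $m \ge 5$ when $E_7$ is added and $m \ge 8$ when $E_8$ is added, matching the thresholds in the statement. I expect the main obstacle to lie in Step~1: the values $k \le 4m$ and $l \le 2m+1$ sit on the boundary of what is compatible with $r = m-1$, so obtaining a sharp form of $h(\text{type}(p), d, r)$ will likely need a case-by-case inspection of the local syzygy modules $AR(f)_p$ for each ADE type, of the sort easily automated in \verb}SINGULAR} consistent with the paper's computational conventions. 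Once this local bound is in hand, the rest of the proof is a finite numerical check over the finitely many multisets of admissible ADE singularities whose Milnor numbers could conceivably sum to $3m^2+1$.
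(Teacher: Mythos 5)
There is a genuine gap: your argument is a plan rather than a proof, and the plan is missing the one idea the paper actually relies on. The paper's proof is short: a maximizing curve of degree $d=2m+1$ has ${\rm mdr}(f)=m-1$, and the Dimca--Sernesi inequality ${\rm mdr}(f)\ge \alpha_C\, d-2$, where $\alpha_C$ is the Arnold exponent (the minimum over ${\rm Sing}(C)$ of the log canonical thresholds), forces $\alpha_C\le\frac{m+1}{2m+1}$. One then checks, using the known values ${\rm lct}_p = \frac{k+3}{2(k+1)}$ for $A_k$, $\frac{l}{2(l-1)}$ for $D_l$, $\frac{7}{12}$, $\frac59$, $\frac{8}{15}$ for $E_6,E_7,E_8$, that every singularity type allowed in the hypothesis has ${\rm lct}_p>\frac{m+1}{2m+1}$ in the stated ranges of $m$; hence $\alpha_C>\frac{m+1}{2m+1}$ and $C$ cannot be maximizing. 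The bounds $k\le 4m$, $l\le 2m+1$ and the thresholds $m\ge 5$, $m\ge 8$ are exactly the points where these lct inequalities hold; they are not, as you suggest, thresholds at which $E_7$ or $E_8$ points become ``too large'' in Milnor number, nor boundary cases of some local syzygy compatibility with $r=m-1$. Nothing in your proposal supplies this log-canonical-threshold input or an equivalent of it.

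Concretely, both of your steps would fail as stated. In Step 1 you posit, without proof or reference, a local bound $\tau_p\le h(\mathrm{type}(p),d,r)$ extracted from restricting a degree-$(m-1)$ syzygy to each singular point; no such clean type-by-type bound is established (or even formulated precisely), and producing one sharp enough to recover exactly the ranges $k\le 4m$, $l\le 2m+1$ would amount to redoing the Arnold-exponent argument in disguise. In Step 2, the $\delta$-invariant (genus) count cannot deliver the contradiction you want: for ADE singularities $\tau_p$ is roughly $2\delta_p$, and $\sum_p\delta_p\lesssim\binom{d-1}{2}=2m^2-m$ allows total Tjurina numbers of order $4m^2$, comfortably above $3m^2+1$ for all $m\ge 3$; indeed the degree-$9$ example in the paper shows $\tau(C)=3m^2+1$ is attainable by an ADE curve, so no configuration-counting argument that ignores the specific types (via their log canonical thresholds) can prove non-existence. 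The ``finite numerical check over admissible multisets'' is likewise not finite in any useful sense without the missing local estimate. As written, the proposal does not prove the theorem.
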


\begin{proof}
    Let $C = \{f=0\}$ be a maximizing curve of degree $d=2m+1 \geq 7$ having $\tau(C)=3m^2+1$. Then $C$ is free with ${\rm mdr}(f) = m-1$. Now we are going to use the following result due to Dimca and Sernesi \cite[Theorem 1.2]{DimSer}. Since $C$ admits only ${\rm ADE}$ singularities, which are obviously quasi-homogeneous, then one has 
    $${\rm mdr}(C) \geq \alpha_{C}\cdot {\rm deg}(C) - 2,$$
where $\alpha_{C}$ denotes the Arnold exponent of $C$ that is defined as
$$\alpha_{C} = \min_{p \in {\rm Sing}(C)} \bigg\{{\rm lct}_{p}(C)\bigg\}.$$
Here by ${\rm lct}_{p}(C)$ we mean the log canonical threshold of $p \in {\rm Sing}(C)$.
Since ${\rm mdr}(f) = m-1$, we get
$$m-1 = {\rm mdr}(f) \geq \alpha_{C}\cdot(2m+1)-2,$$
and hence
\begin{equation}
\alpha_{C} \leq \frac{m+1}{2m+1}.    
\end{equation}
Summing up this part of the proof, if $C$ is maximizing of odd degree $2m+1$, then $\alpha_{C} \leq \frac{m+1}{2m+1}$. We can rephrase this condition in a way that it suits better for our purposes, namely if $C$ is a simply singular curve of degree $2m+1$ such that $\alpha_{C} > \frac{m+1}{2m+1}$, then $C$ cannot be maximizing. Now we are going determine, for a simply singular curve $C$ of a given degree $d = 2m+1$, these ${\rm ADE}$ singularities for which $C$ is not maximizing. In order to do so, we need to recall the following (see for instance \cite{PN}):
    \begin{enumerate}
        \item[a)] If $p \in {\rm Sing}(C)$ is a singularity of type $A_k$ with $k \geq 1$, then ${\rm lct}_{p}(C)=\frac{k+3}{2(k+1)}$.
        \item[b)] If $p \in {\rm Sing}(C)$ is a singularity of type $D_k$ with $k \geq 4$ then ${\rm lct}_{p}(C)=\frac{k}{2(k-1)}$.
        \item[c)] If $p \in {\rm Sing}(C)$ is a singularity of type $E_6$ then ${\rm lct}_{p}(C)=\frac{7}{12}$.
        \item[d)] If $p \in {\rm Sing}(C)$ is a singularity of type $E_7$ then ${\rm lct}_{p}(C)=\frac{5}{9}$.
        \item[e)] If $p \in {\rm Sing}(C)$ is a singularity of type $E_8$ then ${\rm lct}_{p}(C)=\frac{8}{15}$.
    \end{enumerate}      
Assume that $C$ of a fixed degree $d=2m+1 \geq 7$ \textbf{is not maximizing}. Then focusing on $A_{k}$ singularities $p \in {\rm Sing}(C)$ the following condition holds
    $${\rm lct}_{p}(C) = \frac{k+3}{2(k+1)} >  \frac{m+1}{2m+1},$$
and simple computations reveal that $k \leq 4m$. \\

Next, let us focus on $D_{l}$ singularities $p \in {\rm Sing}(C)$, then we have
    $${\rm lct}_{p}(C) = \frac{l}{2(l-1)} > \frac{m+1}{2m+1},$$
and after simple manipulations we get $l \leq 2m+1$. \\

Assume now that $p \in {\rm Sing}(C)$ is an $E_{6}$ singularity, then
$${\rm lct}_{p}(C) =\frac{7}{12} > \frac{m+1}{2m+1},$$
and since $m\geq 3$, this inequality is always satisfied.

Summing up, we proved that if $C$ is a simply singular curve of degree $d=2m+1 \geq 7$ admitting $A_{k}$ singularities with $k\leq 4m$, $D_{l}$ singularities with $l \leq 2m+1$ and $E_{6}$ singularities, then $C$ cannot be maximizing.

Assume now that our simply singular curve $C$ is of degree $d=2m+1 \geq 11$ and let $p \in {\rm Sing}(C)$ be a singularity of type $E_{7}$. Then we have
    $${\rm lct}_{p}(C) = \frac{5}{9} > \frac{m+1}{2m+1},$$
and by the assumption that $m \geq 5$ this inequality always holds. Hence if $C$ is a simply singular curve of degree $d=2m+1 \geq 11$ admitting $A_{k}$ singularities with $k\leq 4m$, $D_{l}$ singularities with $l \leq 2m+1$, $E_{6}$ and $E_{7}$ singularities, then $C$ cannot be maximizing.

Finally, let $C$ be a simply singular curve of degree $d=2m+1 \geq 17$ and let $p \in {\rm Sing}(C)$ be a singularity of type $E_{8}$. Then we have
    $${\rm lct}_{p}(C)  = \frac{8}{15} > \frac{m+1}{2m+1}$$
and since $m \geq 8$, this inequality is always satisfied.  Hence if $C$ is a simply singular curve of degree $d=2m+1 \geq 17$ admitting $A_{k}$ singularities with $k\leq 4m$, $D_{l}$ singularities with $l \leq 2m+1$, $E_{6}$, $E_{7}$ and $E_{8}$ singularities, then $C$ cannot be maximizing. This completes the proof. 
\end{proof}
As we mentioned in Introduction, we are not aware of so many examples of maximizing curves of odd degree, and the above results show that it will be a really challenging problem to find new examples maximizing curves of odd degree. To justify our prediction, the example below shows that our criterion is optimal in degree $d=9$.
\begin{example}[The only known maximizing curve of degree $9$, {\cite[Example 6.4]{DIPS}}]
Let us consider the following curve
$$C = \{f=(x^2+y^2+z^2)^3-27x^2y^2z^2=0\},$$
which is the dual of the quartic with $3$ nodes $C' : \{x^{2}y^{2}+ y^{2}z^{2} + x^{2}z^{2}=0\}$.
This sextic curves has six simple cusps $A_2$ at the points $(0:1:\pm i)$, $(1:0: \pm i )$ and $ (1:\pm i: 0)$, where $i^2=-1$, and four nodes $A_1$ at the points $(1:\pm 1: \pm 1)$.  One can check, using \verb}SINGULAR}, that $C$ is not free. 
If we add the lines $L_1:x=0$, $L_2:y=0$ and $L_3:z=0$, the resulting curve
$$C''=C \cup L_{1} \cup L_{2} \cup L_3 = \{f'=xyz\left( (x^2+y^2+z^2)^3-27x^2y^2z^2 \right)=0\}$$
has six singularities $E_7$ at the points $(0:1:\pm i)$, $(1:0: \pm i )$ and $ (1:\pm i: 0)$, and seven nodes $A_1$ at the points
$(1:\pm 1: \pm 1)$, $(1:0:0)$, $(0:1:0)$ and $(0:0:1)$.
It follows that
$$\tau(C'')=49 = 3\cdot 4^{2} + 1,$$
and this equality implies that $C''$ is maximizing of degree $9$. 
\end{example}
\begin{remark}
As we saw above, simply singular plane curves $C$ of degree $9$ can be maximizing if they just admit $A_{1}$ and $E_{7}$ singularities, and this is also visible on the level of our criterion.
\end{remark}
\section{{\it M}-curves}
In the previous section we saw that it is a very difficult problem to construct maximizing curves of odd degree. This is very disappointing, but here we want to present a remedy by introducing slightly general notion (compared with maximizing curves) of $M$-curves. In order to define such curves,  we need to introduce simple elliptic singularities for plane curves, which are the following:
\begin{center}
\begin{tabular}{ll}
$X_{9}$ with $a\neq 4$ & $: \, x^{4} + y^{4} + ax^{2}y^{2} = 0$, \\
$T_{2,3,6}$ with $4a^{3}+27 \neq 0$ & $: \, x^{3} + y^{6} + ax^{2}y^{2} = 0$.
\end{tabular}
\end{center}
It is well-known that simple elliptic singularites (${\rm SE}$ singularities for short) are quasi-homogeneous and they have the same log canonical threshold equal to $\frac{1}{2}$. 

Let $C = \{f=0\} \subset \mathbb{P}^{2}_{\mathbb{C}}$ be a reduced plane curve of even degree $d=2m\geq 4$ admitting ${\rm ADE}$ and ${\rm SE}$ singularities, \textbf{meaning here that $C$ admits at least one $SE$ singularity of any type}.
Then $\alpha_{C}=\frac{1}{2}$, and by a result due to Dimca and Sernesi \cite[Theorem 1.2]{DimSer} we have
$${\rm mdr}(f) \geq \frac{1}{2}\cdot d - 2 =m-2.$$
It means for this class of plane curves the minimal possible value of ${\rm mdr}$ is equal to $m-2$. This observation leads us to the following crucial definition.
\begin{definition}
Let $C = \{f=0\} \subset \mathbb{P}^{2}_{\mathbb{C}}$ be a reduced plane curve of even degree $d=2m\geq 4$ admitting ${\rm ADE}$ and ${\rm SE}$ singularities, then $C$ is an $M$-curve if $C$ is free with ${\rm mdr}(f)=m-2$.
\end{definition}
The main result devoted to $M$-curves of even degree can be formulated as follows. Before that happen, we need the following technical result due to du-Plessis and Wall \cite{duP}.

\begin{theorem}
\label{thm1}
Let $C = \{f=0\}$ be a reduced plane curve of degree $d$ and let $r = {\rm mdr}(f)$.
Then the following two cases hold.
\begin{enumerate}
\item[a)] If $r < d/2$, then $\tau(C) \leq \tau_{max}(d,r)= (d-1)(d-r-1)+r^2$ and the equality holds if and only if the curve $C$ is free.
\item[b)] If $d/2 \leq r \leq d-1$, then
$\tau(C) \leq \tau_{max}(d,r)$,
where, in this case, we set
$$\tau_{max}(d,r)=(d-1)(d-r-1)+r^2- \binom{2r-d+2}{2}.$$
\end{enumerate}
\end{theorem}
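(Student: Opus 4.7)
The plan is to extract the bound from the graded minimal free resolution of the syzygy module $AR(f)$, converted into a Chern-class / Hilbert-polynomial computation via the Bourbaki trick. The starting point is the short exact sequence
\[
0 \longrightarrow AR(f) \longrightarrow S(-(d-1))^3 \xrightarrow{(a,b,c)\mapsto af_x+bf_y+cf_z} J_f \longrightarrow 0,
\]
which presents $AR(f)$ as a graded $S$-module of rank $2$. Since $C$ is reduced, the partials $f_x,f_y,f_z$ have no common factor, so the module is torsion-free, and sheafifying we get a rank-$2$ reflexive sheaf $\mathcal{E}$ on $\P^2$ whose total Chern classes are $c_1(\mathcal{E})=-(d-1)$ (from the above sequence with $J_f$ being a generically trivial ideal of the singular locus) and $c_2(\mathcal{E})=(d-1)^2-\tau(C)$ (the length of the zero-scheme defined by $J_f$ on $\P^2$ is exactly $\tau(C)$).

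Next, I pick a primitive minimal-degree syzygy $\rho_0\in AR(f)_r$ and consider the inclusion $S(-r)\hookrightarrow AR(f)$ it induces. Because $AR(f)$ is reflexive of rank $2$ on the smooth surface $\P^2$, the quotient is torsion-free of rank $1$, hence of the form $\mathcal{I}_Z(-(d-1-r))$ for some zero-dimensional subscheme $Z\subset\P^2$ (using $c_1(\mathcal{E})=-(d-1)$). This yields a Bourbaki-type exact sequence
\[
0 \longrightarrow \mathcal{O}_{\P^2}(-r) \longrightarrow \mathcal{E} \longrightarrow \mathcal{I}_Z(-(d-1-r)) \longrightarrow 0.
\]
Computing $c_2(\mathcal{E})$ from this sequence gives $c_2(\mathcal{E})=r(d-1-r)+\deg Z$, and combining with the previous formula for $c_2(\mathcal{E})$ yields
\[
\tau(C) \;=\; (d-1)^2 - r(d-1-r) - \deg Z \;=\; (d-1)(d-r-1) + r^2 - \deg Z.
\]
Since $\deg Z\ge 0$, this proves case (a), and the equality case $\deg Z=0$ forces $Z=\emptyset$, so $\mathcal{E}$ is a direct sum of line bundles, which by Saito's criterion is precisely the freeness of $C$.

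For case (b), where $r\ge d/2$, the companion twist $\mathcal{I}_Z(-(d-1-r))$ has $d-1-r\le r$, so the Bourbaki sequence cannot be assumed to have $\rho_0$ as a minimal generator of the second summand. Concretely, multiples $\ell\cdot\rho_0$ by linear forms $\ell$ can already occupy the graded pieces where we expected the ``second generator'' to live; these produce redundancies in the resolution for degrees $r+1,\dots,d-1-r$ if they were available, but now we must instead subtract the dimension of the natural map $S_{d-2r-2+k}\cdot\rho_0 \to AR(f)_{d-1-r+k}$ contributing to the generators. The count of this redundancy is exactly $\binom{2r-d+2}{2}$, corresponding to pairs of linear forms multiplying $\rho_0$ into the region where new generators would otherwise live. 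Incorporating this correction sharpens the bound from $(d-1)(d-r-1)+r^2$ down to $\tau_{\max}(d,r)=(d-1)(d-r-1)+r^2-\binom{2r-d+2}{2}$.

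The main obstacle is case (b): one must show that this binomial is an \emph{unavoidable} lower bound on $\deg Z$ when $r\ge d/2$, rather than just a heuristic from the shift of the Bourbaki sequence. The clean way is to argue that $\mathcal{E}(r)$ has a non-vanishing global section (namely $\rho_0$) and that the sections of $\mathcal{E}(r)$ multiplied by $H^0(\mathcal{O}_{\P^2}(2r-d+2))$ must inject into $H^0(\mathcal{E}(3r-d+2))$ — comparing dimensions with the computed Hilbert polynomial of $\mathcal{E}$ forces $\deg Z \ge \binom{2r-d+2}{2}$. Making this injectivity rigorous, and ruling out coincidences coming from the geometry of $Z$, is the delicate technical step.
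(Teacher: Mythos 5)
Your part (a) is essentially correct and is the standard modern sheaf-theoretic argument: the paper itself gives no proof of this theorem (it is quoted from du Plessis--Wall, whose original argument runs through discriminants of versal deformations, a completely different route), so your Bourbaki-sequence computation is a legitimate alternative. Two small repairs are needed there. First, the cokernel of $S(-r)\to AR(f)$ is not torsion-free merely because $AR(f)$ is reflexive; what you actually need is that the section $\rho_0\in H^0(\mathcal{E}(r))$ has zero locus of codimension $2$, and this follows from minimality of $r$: if $\rho_0$ vanished along a divisor $\{h=0\}$ of degree $e>0$, then $\rho_0/h$ would be a nonzero syzygy of degree $r-e<r$. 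Second, in the equality case you should note that $Z=\emptyset$ makes the Bourbaki sequence an extension of $\mathcal{O}_{\mathbb{P}^2}(-(d-1-r))$ by $\mathcal{O}_{\mathbb{P}^2}(-r)$, which splits because $H^1(\mathbb{P}^2,\mathcal{O}(d-1-2r))=0$; that splitting is what gives freeness.

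Part (b), however, has a genuine gap, which you yourself flag: you never prove $\deg Z\geq \binom{2r-d+2}{2}$, and the mechanism you propose does not produce it. The injectivity of $H^0(\mathcal{O}_{\mathbb{P}^2}(2r-d+2))\cdot\rho_0\to H^0(\mathcal{E}(3r-d+2))$ is automatic (multiplication by a nonzero section of a torsion-free sheaf is injective) and by itself yields no bound on $\deg Z$; the "redundancy count" of multiples of $\rho_0$ is a heuristic, not an argument. The missing idea is to use minimality of $r$ at the twist $t=r-1$: since $H^1$ of any line bundle on $\mathbb{P}^2$ vanishes, the Bourbaki sequence gives, for every $t$,
\begin{equation*}
h^0(\mathcal{E}(t)) \;=\; h^0(\mathcal{O}_{\mathbb{P}^2}(t-r)) \;+\; h^0(\mathcal{I}_Z(t-d+1+r)),
\end{equation*}
and taking $t=r-1$, where $h^0(\mathcal{E}(r-1))=\dim AR(f)_{r-1}=0$, forces $h^0(\mathcal{I}_Z(2r-d))=0$. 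A zero-dimensional scheme of length strictly less than $\binom{k+2}{2}$ always lies on a curve of degree $k$ (its points impose at most $\deg Z$ linear conditions on $H^0(\mathcal{O}_{\mathbb{P}^2}(k))$), so $h^0(\mathcal{I}_Z(2r-d))=0$ gives exactly $\deg Z\geq\binom{2r-d+2}{2}$ when $2r\geq d$, and combined with your identity $\tau(C)=(d-1)(d-r-1)+r^2-\deg Z$ this is precisely case (b). With that substitution your outline becomes a complete proof; as it stands, the key inequality of case (b) is asserted, not established.
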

Here is our main result in this section.
\begin{theorem}
\label{Mce}
Let $C = \{f=0\} \subset \mathbb{P}^{2}_{\mathbb{C}}$ be a reduced plane curve of even degree $d=2m\geq 4$ admitting ${\rm ADE}$ and ${\rm SE}$ singularities. Then $C$ is an $M$-curve if and only if $\tau(C) = 3m^{2}-3m+3$.
\end{theorem}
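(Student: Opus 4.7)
My approach is to leverage two ingredients already in the paper: the Dimca-Sernesi lower bound on ${\rm mdr}(f)$ used in Theorem \ref{main}, and the du Plessis-Wall upper bound on $\tau(C)$ from Theorem \ref{thm1}. For curves of even degree $2m$ carrying at least one SE singularity, I plan to argue that these two bounds are jointly compatible with $\tau(C) = 3m^2 - 3m + 3$ only when $r := {\rm mdr}(f) = m-2$, and that the du Plessis-Wall inequality at $r = m-2$ is then forced to be an equality, which by Theorem \ref{thm1}(a) is precisely freeness.

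\textbf{Easy direction.} If $C$ is an $M$-curve, then by definition $C$ is free with $r = m-2$, and I would simply substitute into the freeness criterion \eqref{duPles} to obtain
$$\tau(C) = (2m-1)^2 - (m-2)(m+1) = 3m^2 - 3m + 3.$$

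\textbf{Converse.} Assume $\tau(C) = 3m^2 - 3m + 3$. First I would observe that the ADE log canonical thresholds collected in the proof of Theorem \ref{main} are all strictly larger than $\tfrac{1}{2}$, while every SE singularity has lct equal to $\tfrac{1}{2}$; since $C$ carries at least one SE singularity, this yields $\alpha_{C} = \tfrac{1}{2}$, and hence the Dimca-Sernesi inequality gives $r \geq \alpha_{C}\cdot d - 2 = m - 2$. Next, I plan to use Theorem \ref{thm1} to rule out every value of $r$ strictly above $m - 2$. A direct computation gives $\tau_{max}(2m, m-1) = (2m-1)m + (m-1)^2 = 3m^2 - 3m + 1 < 3m^2 - 3m + 3$. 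For $r \geq m$ I would rewrite the case (b) bound using the substitution $s = r - m + 1 \geq 1$; expanding $\binom{2s}{2} = s(2s-1)$ reduces case (b) to $\tau_{max}(2m, r) = 3m^2 - 3m + 1 - s^2 \leq 3m^2 - 3m$, which again lies strictly below $\tau(C)$. These two incompatibilities force $r = m - 2$. Finally, since $r = m - 2 < m = d/2$ places us in case (a) of Theorem \ref{thm1}, one has $\tau(C) \leq 3m^2 - 3m + 3$ with equality if and only if $C$ is free; the hypothesis saturates this inequality, so $C$ is free and therefore an $M$-curve.

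\textbf{Main obstacle.} I do not anticipate a real conceptual obstacle here: the argument is a tight sandwich of $r$ between the Hodge-theoretic lower bound of Dimca-Sernesi and the syzygetic upper bound of du Plessis-Wall. The only part that needs careful bookkeeping is the verification that $\tau_{max}(2m, r)$ drops strictly below $3m^2 - 3m + 3$ for every admissible $r \neq m-2$; the clean closed form $3m^2 - 3m + 1 - s^2$ in case (b), obtained via the substitution $s = r - m + 1$, makes this transparent and confirms that the minimum of $\tau_{max}(2m, \cdot)$ over $r \geq m - 2$ that still exceeds $3m^2 - 3m + 3$ is attained only at $r = m - 2$.
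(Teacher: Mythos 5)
Your proposal is correct and follows essentially the same route as the paper: the forward direction via the freeness criterion \eqref{duPles} at $r=m-2$, and the converse by sandwiching $r$ between the Dimca--Sernesi bound $r\geq m-2$ (using $\alpha_C=\tfrac12$ from the SE singularity) and the du Plessis--Wall bound of Theorem \ref{thm1}, with equality forcing $r=m-2$ and freeness by part a). The only difference is cosmetic: where the paper simply asserts that $\tau_{max}(d,r)$ is strictly decreasing in $r$, you verify it explicitly via $\tau_{max}(2m,m-1)=3m^2-3m+1$ and the closed form $3m^2-3m+1-s^2$ in case b), which is a slightly more careful bookkeeping of the same step.
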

\begin{proof}
Assume that $C$ is an $M$-curve of degree $d=2m$ and $r = {\rm mdr}(f) = m-2$. Then, by the above mentioned result due to du-Plessis and Wall, the following equality holds:
\begin{equation}
    \tau(C) = (d-1)(d-r-1)+r^2 = (2m-1)(2m-m+2-1)+(m-2)^2 = 3m^{2} - 3m + 3,
\end{equation}
and this completes the proof for the first implication. For the revers implication, let us assume that $\tau(C) = 3m^{2}-3m+3$. We are going to show that $C$ is free. Since $r = {\rm mdr}(f) \geq m-2$, by Theorem \ref{thm1} we see that
\begin{equation}
\tau(C) \leq \tau_{max}(2m,m-2) = 3m^{2}-3m+3,
\end{equation}
and this follows from the fact that the function $\tau_{max}(d,r)$ is strictly decreasing as a function with respect to $r$ on the interval $[0,2m-1]$. This gives us that,
$$\tau(C) = \tau_{max}(2m,m-2),$$
which implies that $r=m-2$ and $C$ has to be free by Theorem \ref{thm1} a).
\end{proof}
Let us now present some examples of $M$-curves in even degrees. 

\begin{example}(The Hesse arrangement)
\begin{align*}
\mathcal{H} = \{f=xyz(x+y+z)(x+y+ez)&(x+y+e^2 z)(x+ey+z)(x+e^2 y+z)(ex+y+z)\\& (e^2 x+y+z)(ex+e^2 y+z)(e^2 x+ey+z) = 0\},
\end{align*}
where $e^2 + e + 1=0$. As we remember, the Hesse arrangement consists of $12$ lines and it has $12$ double and $9$ quadruple points, so $9$ singularities of type $X_{9}$. We know that $\mathcal{H}$ is free by a general theory of reflection arrangements, and ${\rm mdr}(f)=4$, $\tau(\mathcal{H}) = 93$, hence $\mathcal{H}$ is an example of an even degree $M$-curve.
\end{example}
The forthcoming examples come from a recent paper by Pokora \cite{PP2024}, where he constructed new examples of conic-line arrangements with ordinary quasi-homogeneous singularities, so arrangements admitting $A_{1}$, $D_{4}$ and $X_{9}$ singularities. For the simplicity, we denote by $n_{2}$ the number of $A_{1}$ singularities, $n_{3}$ the number of $D_{4}$ singularities, and by $n_{4}$ the number of $X_{9}$ singularities.

\begin{example}[$M$-curve of degree $8$]
    $$C\mathcal{L} = \{f=xy(x^2+y^2-z^2)(y+x-z)(y-x-z)(y+x+z)(y-x+z)=0\}.$$
    We can check that for this arrangement the weak combinatorics has the form $(n_{2},n_{3},n_{4}) = (3,0,4)$, which implies that $\tau(C\mathcal{L} ) =39$, and ${\rm mdr}(f)=2$. So this is an example of an even degree $M$ curve.
\end{example}

\begin{example}[$M$-curve of degree $12$]

\begin{multline*}
C\mathcal{L} = \{f= xyz(x-z)(x+z)(y+z)(y-z)(y-x-z)(y-x+z)(y-x) \\ (-x^{2}+xy-y^{2}+z^{2}) = 0\}.
\end{multline*}
This arrangement has the weak combinatorics $(n_{2},n_{3},n_{4})=(8,1,9)$, which implies that $\tau(\mathcal{CL}_{16})=93$ and ${\rm mdr}(Q_{16})=4$. So this is another example of an even degree $M$ curve.
\end{example}

Let us now focus on curves of odd degree. Let $C = \{f=0\} \subset \mathbb{P}^{2}_{\mathbb{C}}$ be a reduced plane curve of even degree $d=2m+1\geq 5$ admitting ${\rm ADE}$ and ${\rm SE}$ singularities, meaning here that $C$ admits at least one $SE$ singularity of any type.
Then again $\alpha_{C}=\frac{1}{2}$, and by the aforementioned result due to Dimca and Sernesi \cite[Theorem 1.2]{DimSer} we have
$${\rm mdr}(f) \geq \frac{1}{2}\cdot d - 2 = \frac{1}{2}(2m+1) - 2 \geq m-1.$$
It means for this class of plane curves the minimal possible value of ${\rm mdr}$ is equal to $m-1$. This observation leads us to the following crucial definition.
\begin{definition}
Let $C = \{f=0\} \subset \mathbb{P}^{2}_{\mathbb{C}}$ be a reduced plane curve of odd degree $d=2m+1\geq 5$ admitting ${\rm ADE}$ and ${\rm SE}$ singularities, then $C$ is an $M$-curve if $C$ is free with ${\rm mdr}(f)=m-1$.
\end{definition}
In the same sprit as for $M$-curves of even degrees, we have the following characterization.
\begin{theorem}
Let $C = \{f=0\} \subset \mathbb{P}^{2}_{\mathbb{C}}$ be a reduced plane curve of odd degree, $d=2m+1\geq 5$ admitting ${\rm ADE}$ and ${\rm SE}$ singularities. Then $C$ is an $M$-curve if and only if $\tau(C) = 3m^{2}+1$.
\end{theorem}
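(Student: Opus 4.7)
The proof will mirror the structure of Theorem \ref{Mce} essentially verbatim, with the roles of $m-2$ and $m-1$ swapped (since the Dimca--Sernesi bound now gives $\mathrm{mdr}(f) \geq m-1$ for odd degree $2m+1$, not $m-2$). The plan is therefore a two-direction argument anchored on the du Plessis--Wall inequality (Theorem \ref{thm1}).

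For the forward direction, suppose $C$ is an $M$-curve, so by definition $C$ is free and $r := \mathrm{mdr}(f) = m-1$. Because $d = 2m+1$ is odd and $m \geq 2$, we have $r = m-1 < (2m+1)/2 = d/2$, so case (a) of Theorem \ref{thm1} applies, and freeness gives the equality $\tau(C) = (d-1)(d-r-1) + r^2$. A direct substitution $(d,r) = (2m+1, m-1)$ yields $\tau(C) = (2m)(m+1) + (m-1)^2 = 3m^2 + 1$, which is the claim.

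For the reverse direction, assume $\tau(C) = 3m^2+1$. Since $C$ admits at least one $\mathrm{SE}$ singularity and all its singularities are quasi-homogeneous $\mathrm{ADE}$ or $\mathrm{SE}$, we have $\alpha_C = \frac{1}{2}$, and the Dimca--Sernesi bound quoted in the paragraph preceding the definition gives $r \geq m-1$. Applying Theorem \ref{thm1}, $\tau(C) \leq \tau_{\max}(2m+1, r)$, and I will argue by monotonicity that $\tau_{\max}(2m+1, r) \leq \tau_{\max}(2m+1, m-1) = 3m^2+1$ for every admissible $r \geq m-1$. The assumption $\tau(C) = 3m^2+1$ then forces equality throughout, which pins down $r = m-1$ and, since $m-1 < d/2$, triggers the ``if and only if'' part of case (a) of Theorem \ref{thm1} to conclude that $C$ is free; combined with $\mathrm{mdr}(f) = m-1$, this is precisely the definition of an $M$-curve.

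The only step requiring genuine verification is the monotonicity of $\tau_{\max}(2m+1, r)$ in $r$ on the integer interval $[m-1, 2m]$. In case (a), for $r \leq m$, differentiating $(d-1)(d-r-1) + r^2$ in $r$ gives $2r - (d-1) = 2r - 2m \leq 0$, so the function is non-increasing, strictly so for $r < m$; evaluating gives $\tau_{\max}(2m+1, m-1) = 3m^2+1$ and $\tau_{\max}(2m+1, m) = 3m^2$. In case (b), for $r \geq m+1$, the binomial correction $\binom{2r-d+2}{2} = \binom{2r-2m+1}{2}$ grows quickly; a sample check at $r = m+1$ gives $\tau_{\max}(2m+1, m+1) = 2m(m-1) + (m+1)^2 - 3 = 3m^2 - 2$, so the value drops below $3m^2$ as we cross from case (a) to case (b), and strict decrease thereafter follows by computing the forward difference. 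This bookkeeping across the case boundary is the one mildly delicate point, but it is a finite calculation entirely analogous to the one implicit in the proof of Theorem \ref{Mce}.
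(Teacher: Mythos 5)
Your proposal is correct and follows essentially the same route as the paper, whose own proof simply says the argument is analogous to Theorem \ref{Mce}: freeness plus the du Plessis--Wall formula for the forward direction, and the Dimca--Sernesi bound $r \geq m-1$ together with the monotonicity of $\tau_{\max}(2m+1,r)$ to force $r=m-1$ and equality in case (a) for the converse. Your explicit verification of the decrease of $\tau_{\max}$ across the case (a)/(b) boundary just makes precise a step the paper leaves implicit.
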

\begin{proof}
The proof goes analogously as for Theorem \ref{Mce}.
\end{proof}
Now we would like to present an example of $M$-curve of degree $11$ that was presented in \cite{PP2024}.
\begin{example}[$M$-curve of degree $11$]
Let us consider the following conic-line arrangement
\begin{align*}
C\mathcal{L}  = \{f=xy(x-z)(x+z)(y-z)(y+z)(y-x-z)&(y-x+z)(y-x)\\&(-x^2+xy-y^2+z^2)=0\}.      
\end{align*}
This arrangement has the following singularities
$$(n_{2},n_{3},n_{4}) = (6,4,6),$$
giving $\tau(C\mathcal{L}) = 76$, and ${\rm mdr}(f)=4$, hence $C\mathcal{L}$ is an example of an odd degree $M$-curve.
\end{example}

Finishing this note, we would like to focus our attention on $M$-line arrangements, i.e., line arrangements that are $M$-curves. Our goal now is to provide an effective combinatorial description of such $M$-line arrangements.
\begin{theorem}
\label{cm}
Let $\mathcal{L} = \{f=0\}$ be an $M$ arrangement of $d\geq 5$ lines. Then
\begin{enumerate}
\item[a)] If $d=2m+1 \geq 5$, then 
\begin{equation}
\label{cm1}
    n_{2} + 2n_{3} + 3n_{4} = m^{2}+2m-1.
\end{equation}
\item[b)] If $d=2m \geq 6$, then 
\begin{equation}
    n_{2} + 2n_{3} + 3n_{4} = m^{2} + m-3.
\end{equation}
\end{enumerate}
\end{theorem}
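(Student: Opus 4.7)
The plan is to exploit two identities satisfied by a line arrangement: the pair-counting identity coming from $\binom{d}{2}$, and the Tjurina-number identity coming from the $M$-curve hypothesis. First I would observe that since an $M$-curve admits only ADE and SE singularities, and a line arrangement singularity where exactly $k$ lines meet is locally of ordinary multiplicity $k$, the allowed multiplicities are forced to satisfy $k\in\{2,3,4\}$, giving singularities of types $A_1$, $D_4$, and $X_9$ respectively. Thus the combinatorics of $\mathcal{L}$ is genuinely captured by the triple $(n_2,n_3,n_4)$, justifying the statement of the theorem.

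Next, since these singularities are quasi-homogeneous, the Tjurina number equals the Milnor number $(k-1)^2$ at each singular point. Summing over $\mathrm{Sing}(\mathcal{L})$ yields $\tau(\mathcal{L}) = n_2 + 4n_3 + 9n_4$. The $M$-curve characterizations (Theorem \ref{Mce} in the even case and its odd-degree analogue proved just above) then fix the value of $\tau(\mathcal{L})$: it equals $3m^2+1$ when $d=2m+1$ and $3m^2 - 3m + 3$ when $d=2m$.

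The remaining ingredient is the classical pair-counting identity $\binom{d}{2} = n_2 + 3n_3 + 6n_4$, obtained by noting that each pair of distinct lines intersects in exactly one point of $\mathbb{P}^2$, and a point of multiplicity $k$ accounts for $\binom{k}{2}$ such pairs. To finish I would take the linear combination that eliminates the higher coefficients: concretely, twice the pair-counting identity minus the Tjurina identity gives the left-hand side $n_2 + 2n_3 + 3n_4$. A direct arithmetic computation of the right-hand side yields $2(2m^2+m) - (3m^2+1) = m^2 + 2m - 1$ in the odd case, and $2(2m^2 - m) - (3m^2 - 3m + 3) = m^2 + m - 3$ in the even case, matching the two formulas in the statement.

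There is no serious obstacle here; the one substantive point is restricting the possible multiplicities at the singular points of $\mathcal{L}$, after which the entire argument reduces to combining two linear identities and carrying out a short calculation. The formulas are essentially forced by the two equations in $(n_2,n_3,n_4)$ and by the weights $1,4,9$ versus $1,3,6$ attached to the Tjurina and pair-counting identities respectively.
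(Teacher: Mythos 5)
Your proposal is correct and follows essentially the same route as the paper: your combination $2\binom{d}{2}-\tau(\mathcal{L})$ is literally the paper's identity $\sum_{k\geq 2}(k-1)t_k = d^2-d-\sum_{k\geq 2}(k-1)^2t_k$, with the value of $\tau(\mathcal{L})$ supplied by freeness/the $M$-curve characterization. The only (welcome) difference is that you make explicit the restriction of multiplicities to $k\in\{2,3,4\}$ via the ADE/SE hypothesis, which the paper leaves implicit.
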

\begin{proof}
Since $\mathcal{L}$ is free, then the following holds:
\begin{equation}
\tau(\mathcal{L}) = (d-1)^{2} - d_{1}(d-d_{1}-1),
\end{equation}
where ${\rm mdr}(f) = d_{1}$. Observe that
$$\tau(\mathcal{L}) = \sum_{k \geq 2}(k-1)^{2}t_{k} = d^{2}-d - \sum_{k \geq 2} (k-1)t_{k},$$
where the last equality follows from the naive combinatorial count for line arrangements, namely
$$d^{2}-d = \sum_{k\geq 2}(k^{2}-k)t_{k}.$$ 
This gives us
$$\tau(\mathcal{L}) = d^{2} - d - \sum_{k\geq 2}(k-1)t_{k} = d^{2}-2d+1 -d_{1}(d-d_{1}-1) = d^{2}-2d+1 - d_{1}d_{2},$$
where the last equality follows from the freeness of $\mathcal{L}$, namely one has $d_{2} = d - d_{1}-1$.
After simple manipulations, we get
$$\sum_{k\geq 2}(k-1)t_{k} = d_{1}d_{2} + d_{1} + d_{2}.$$
Assume that $d=2m+1\geq 1$. Then $d_{1} = m-1$ and $d_{2}=m+1$, we get
$$n_{2} + 2n_{3} + 3n_{4} = (m-1)(m+1) + (m-1)+(m+1) = m^{2}+2m-1.$$
Finally, we assume that $d=2m\geq 6$. Then $d_{1} = m-2$ and $d_{2} = m+1$, and we obtain
$$n_{2} +2n_{3} + 3n_{4} = (m-2)(m+1) + (m-2)+(m+1) = m^{2}+m-3.$$
\end{proof}

Our Theorem \ref{cm}, even if this is just a sufficient condition, is a very easy-to-use tool that allows us to detect some interesting examples of $M$-line arrangements among sporadic simplicial line arrangements presented in \cite{Cuntz}.

\begin{example}
Let us consider the simplicial line arrangement $\mathcal{A}(9,1)$ which has $d=9$ lines, $n_{2}=6$, $n_{3}=4$, and $n_{4}=3$. One can check that \eqref{cm1} is satisfied, so this arrangement can be an $M$-curve. Using \verb}SINGULAR} we can check that ${\rm mdr} = 3$, and $\tau(\mathcal{L}) = 49  = 3m^{2}+1$, hence $\mathcal{A}(9,1)$ is an $M$-curve.

\end{example}
\begin{example}
Let us consider the simplicial line arrangement $\mathcal{A}(13,2)$ which has $d=13$ lines, $n_{2}=12$, $n_{3}=4$, and $n_{4}=9$. One can check that \eqref{cm1} is satisfied, so this arrangement can be an $M$-curve. Using \verb}SINGULAR} we can check that ${\rm mdr} = 5$, and $\tau(\mathcal{L}) = 109 = 3m^{2}+1$, hence $\mathcal{A}(13,2)$ is an $M$-curve.
\end{example}

Concluding our discussions in this part, we look at the Klein arrangement of lines.
\begin{example}
Let us recall that the Klein arrangement $\mathcal{K}$ of $d=21$ lines has exactly $n_{3}=28$ and $n_{4}=21$, and we can easily check that \eqref{cm1} is satisfied. We know that $\mathcal{K}$ is a free arrangement with $d_{1}=9$ and $d_{2}=11$, hence $\mathcal{K}$ is an $M$-line arrangement.
\end{example}

\section*{Acknowledgement}
We would like to thank Piotr Pokora for his guidance and for sharing his thoughts on how to generalize the notion of maximizing curves.

Marek Janasz is supported by the National Science Centre (Poland) Sonata Bis Grant \textbf{2023/50/E/ST1/00025}. For the purpose of Open Access, the authors have applied a CC-BY public copyright license to any Author Accepted Manuscript (AAM) version arising from this submission.

\vskip 0.5 cm
\bigskip
\noindent
Marek Janasz,\\
Department of Mathematics,\\
University of the National Education Commission Krakow,
Podchor\c a\.zych 2,
PL-30-084 Krak\'ow, Poland. \\
\nopagebreak
\textit{E-mail address:} \texttt{marek.janasz@uken.krakow.pl}
\bigskip

\noindent
Izabela Leśniak,\\
Department of Mathematics,\\
University of the National Education Commission Krakow,
Podchor\c a\.zych 2,
PL-30-084 Krak\'ow, Poland. \\
\nopagebreak
\textit{E-mail address:} \texttt{izabela.lesniak@uken.krakow.pl}
\end{document}